\documentclass[12pt,a4paper,final]{amsart}
\usepackage{amsmath,amsfonts,amssymb,amsthm}
\usepackage[final]{graphicx}
\usepackage[notref,notcite]{showkeys}
\usepackage{comment}
\usepackage[left=3cm,right=3cm,top=3cm,bottom=3cm]{geometry}
\usepackage{mathtools}
\usepackage{paralist}
\usepackage{fancyhdr}
\usepackage{enumitem}
\usepackage[all]{xy}
\usepackage[ruled,vlined,algo2e,norelsize]{algorithm2e}
\usepackage{tikz}
\usepackage{libertine}
\usepackage{libertinust1math}
\usepackage{dsfont}
\usepackage[T1]{fontenc}
\usepackage{float}
\usepackage{url}
\usepackage{hyperref}
\frenchspacing
\usepackage[obeyDraft, colorinlistoftodos]{todonotes}
\usepackage[foot]{amsaddr}

\theoremstyle{plain}
\newtheorem{thm}{Theorem}
\newtheorem{prop}[thm]{Proposition}

\theoremstyle{definition}

\newtheorem{rem}[thm]{Remark}
\newtheorem{ex}[thm]{Example}

\newcommand{\N}{\mathbb N}
\newcommand{\Z}{\mathbb Z}
\newcommand{\R}{\mathbb R}
\renewcommand{\S}{\mathbb S}
 
\newcommand{\E}{\mathbb E} 

\newcommand{\F}{\mathbf F} 
\newcommand{\X}{\mathbf X} 
 
\newcommand{\U}{\mathbf U} 
\renewcommand{\P}{\mathbb P}
\renewcommand{\E}{\mathbb E}
\newcommand{\V}{\mathbb V}
\renewcommand{\1}{\mathds 1}
\DeclareMathOperator{\diag}{diag}
\DeclareMathOperator{\ms}{ms}
\DeclareMathOperator{\years}{years}

%%%%%%%%%%%%%%%%%%%%%%%%%%%%%%%%%%%%%%%%%%%%%%%%%%%%%%%%%%%%%%%%%%%%%%%%%%%%%%%%%%%%%%%%%%%%%%%%%%%%%%%%%%%%%%%%%%%%%%%%%%%%%%%%%
\title{A risk analysis framework for real-time control systems}
\date{\today}
\author{Mads Rystok Bisgaard}
\author{Lukas Hewing}
\author{Alexander Domahidi}
%\address{Embotech AG, Giessereistrasse 18, 8005 Zurich, Switzerland}
%\email{bisgaard@embotech.com, domahidi@embotech.com}

\begin{document}
\maketitle

\begin{abstract}
    We present a Monte Carlo simulation framework for analysing the risk involved in deploying real-time control systems in safety-critical applications, as well as an algorithm design technique allowing one (in certain situations) to robustify a control algorithm. Both approaches are very general and agnostic to the initial control algorithm. We present examples showing that these techniques can be used to analyse the reliability of implementations of non-linear model predictive control algorithms.
\end{abstract}

%%%%%%%%%%%%%%%%%%%%%%%%%%%%%%%%%%%%%%%%%%%%%%%%%%%%%%
\section{Introduction}
A central question in embedded systems theory and practice is that of safety and reliability. As systems for safety-critical applications are being built using "best effort practices" \cite{Sifakis07} to an ever increasing extend this is more true today than ever before.
We model the real-world system, whose state $x \in \X$ is to be controlled, by a discrete dynamical system $f$ and denote our control system by $c$:
\begin{align}
f&:\X \times \U \to \X \label{dynamical_system} \\
c&:\X \to \U. \label{control}
\end{align}
Here we are concerned with the question of how to assess the \emph{reliability} of $c$, or in other words the \emph{risk} involved in deploying $c$. For this purpose, we will assume we can assess the reliability of $c$ at a given $x\in \X$. We formalize this by a Boolean function $\F: \X \to \{0,1\}$, with the interpretation that $c(x)$ is considered good/reliable if $\F(x)=1$ and bad/unreliable if $\F(x)=0$. The ability to determine the quality of $c$ at a given $x\in \X$ seems needed in order to say anything meaningful about the reliability of $c$ as a function.

For real world applications one should think of a hierarchical control system in which $c$ is a medium-level controller which is required to determine a control signal within a given time budget and pass it onto a low-level controller. So in the above notation, one thinks of $f$ as being applied at a given frequency and every time it is applied $c$ must have found a control $u$ such that $f(x,u)$ is an optimal/feasible trajectory. If the evaluation of $c$ can be done multiple times then one can find multiple control candidates $u_1, \ldots, u_{N}$ in between two applications of $f$, and the natural requirement is that \emph{at least one of these} gives rise to an optimal/feasible trajectory. Below we present an approach to modelling such a situation as well as a technique for estimating the probability of failure within this model, i.e. the probability of the event that none of the $u_1,\ldots, u_N$ give rise to an acceptable trajectory.
The main result is a framework within which extremely small failure probabilities are computationally tractable along with a design technique which (in certain situations) improves robustness of a control algorithm.

Importantly, one should think of $c$ as a control system implemented as a piece of software, compiled and deployed on a target platform. It is a crucial feature of the risk analysis presented here, that it can be done on the actual target hardware, as opposed to being only a theoretical guarantee relying on e.g. mathematical/code analysis. Such an analysis could easily fail due to myriads of implementation issues (bugs, floating point errors, numerical differences due to different CPUs, compiler differences etc).

\begin{ex}
At Embotech AG we develop cutting edge decision making software for industrial applications. Our core technology is \href{https://www.embotech.com/products/forcespro/overview/}{FORCESPRO} \cite{Domahidi17, DomahidiJerez14}, a codegenerator for numerical optimization software. Given its required input, a FORCESPRO solver produces (together with a solution to the optimization problem) an exitflag indicating the quality of the solution. Exitflag  1 means the solver successfully found a KKT point (which, given a suitably designed control system, in particular means the corresponding control gives rise to a feasible trajectory). For concreteness one can think of $\F$ as a FORCESPRO solver whose output consists simply of its exitflag.
\end{ex}

\subsubsection*{Related work}
Monte Carlo simulation, or more precisely Markov Chain Monte Carlo simulation techniques will be used in a crucial way here. Perhaps the best book to learn the mathematical theory from is \cite{AsmussenGlynn07}, or for people who like more text and descriptions \cite{Owen13}. As the ideas here are concerned with risk analysis and system reliability, a third canonical reference is \cite{Zio14}. \cite{AuBeck01} was a key inspiration for the main idea presented here (see also \cite{AuWang2014} and \cite[Section 6.7]{Zio14}). For the convenience of the reader we collect a few basic comments on Monte Carlo simulation theory in appendix \ref{MCappendix}. 

\begin{comment}
\noindent
\textit{Notation and assumptions:} We will assume the state-space $\X \subset \R^n$ is bounded and equipped with the uniform probability probability measure $U$. We require that $\X$ is such that one can either sample (independent) realizations from $U$ using a random number generator.
\end{comment}

%%%%%%%%%%%%%%%%%%%%%%%%%%%%%%%%%%%%%%%%%%%%%%%%%%%%%%

\section{The statistical models}
\label{latency_budget_model}

As explained above we consider a setup where there is a strict requirement that $c$ can produce a good/reliable control within the \emph{latency budget} $T>0$. I.e. we think of time as being discretized into \emph{latency intervals} of length $T$, 
\[
[0, T], (T, 2T], (2T,3T],\ldots,
\]
and $f$ is applied at the right end of each latency interval. We will focus on the analysis of the reliability within a single \emph{random} latency interval, meaning the system state $x\in \X$ is chosen at random. When aiming to carry out the statistical analysis we are faced with two major challenges
\begin{enumerate}
    \item The generation of system states $x\in \X$ in a random latency interval.
    \item The reliable estimation of \emph{rare events} corresponding to solver failures.
\end{enumerate}

\subsection{Generating system states}
\label{section_generating_system_states}
In a first step, we distinguish between embedded control systems operating at a steady state or carrying out transient tasks.
\begin{description}
\item[Steady-State] In many applications it is reasonable to assume that the embedded control system operates around a given \emph{steady-state}, the distribution of which can be found through e.g.\ high-fidelity simulation. The analysis is then reduced to a single distribution of states. 
\item[Transient] In other cases, the task has predominantly transient behavior. In this case distributions for each latency interval can be generated e.g.\ from simulations starting at a distribution of representative initial conditions. The resulting analysis is then carried out for each time interval separately.
\end{description}
Using such high-fidelity simulation, or even real-world experiments, to generate representative state distributions, however, has two significant drawbacks. First, the simulations are typically expensive, such that a generation of a system state sample is several orders of magnitude more expensive than the evaluation of $c$ itself. Ideally we would therefore like to sample cheaply, focusing computational resources on the $c$ itself. Second, in order to apply methodologies addressing the second challenge, i.e. rare event simulation, we typically need an explicit description of the state distribution via a \emph{density}. This then suggests an intermediate step fitting a suitable density to the empirical distribution, for instance via kernel density estimation.

\begin{rem}
\label{synthetic_state_distribution}
To circumvent these issues, one can instead work with fully synthetic state distributions, e.g. a uniform distribution over the operating range of the embedded control system. 
\end{rem}

In the following we will focus our attention on the second challenge (i.e. rare event simulation) and assume we have fixed a distribution on $\X$, represented by a density, from which to sample the system state associated with a random latency interval. The precise setup we consider will be discussed in details below in section \ref{distribution_of_x0}. We will denote the random variable on $\X$ by $X_1$ and its realization by $x_1$. We will now distinguish between two control system designs which will be analysed below.

\subsection{The latency budget model}
If the time it takes to evaluate $c$ is less than $\frac{T}{2}$ one can evaluate $c$ multiple times within each latency interval. The strict requirement for $c$ to be acceptable is that within each interval the dynamical system will find itself in a state $x\in \X$ for which $\F(x)=1$. We will denote by $N$ the number of times we can evaluate $c$ in $T$ time units. E.g. if $T=105\ms$ and it takes $10\ms$ to evaluate $c$ then $N=10$. 
In other words, the requirement we put on $c$ is that, given a sequence of states 
\begin{equation}
\label{state_sequence}
x_1, x_2, \ldots , x_{N},
\end{equation}
encountered within a given latency interval, at least one of the values $\F(x_1), \F(x_2), \ldots, \F(x_N)$ must equal 1.

\begin{comment}
\begin{rem}
    In principle the ideas of this paper are not restricted to control. The ideas are very general, but at the core is the idea that one can think of $\F$ as being applied to a sequence of states (each being essentially a perturbation of the previous) and that the requirement is to encounter at least one $x$ with $\F(x)=1$ within such a sequence.
\end{rem}
\end{comment}

We will be concerned with computing the probability of experiencing a latency interval within which $c$ fails to meet the requirement, i.e. that a sequence (\ref{state_sequence}) is encountered for which 
\begin{equation}
\label{failure_event}
\F(x_1) = \F(x_2) = \cdots = \F(x_N) = 0.
\end{equation}
We will model the evolution of the dynamics within a single latency interval as a Markov chain. The dependency between the $x_k$s will be modelled by random \emph{independent} perturbations $(\delta_k)_{k=2}^{N}$, in the sense that 
\begin{equation}
\label{markovChain}
X_k = X_{k-1} + \delta_{k}, \quad k = 2,\ldots, N.
\end{equation}
It is natural to also assume $\delta_k$ to be independent from $X_l$ for $l < k-1$, but from a modelling perspective it is convenient to allow $X_{k-1}$ and $\delta_k$ to be dependent. We will denote by $A_k$ the event that $x_k$ gives rise to a bad/unreliable control, i.e.
\begin{equation}
    \label{defAset}
    A_k := ( \F(X_k) = 0 ), \quad k = 1,\ldots, N.
\end{equation}
Below we refer to this statistical model for assessing the reliability of $c$ as the \emph{latency budget model}.

\begin{comment}
With this notation, the independence assumptions imply that 
\begin{equation}
\label{MarkovProperty}
\P( A_k \ |\ \cap_{j<k}A_j)=\P(A_k|A_{k-1}), \quad k = 2,\ldots, N,
\end{equation}
which is usually called the \emph{Markov property}. 
\end{comment}

\begin{rem}
It might at first seem strange to model a (at least in theory) deterministic system (\ref{dynamical_system}) by a random one (\ref{markovChain}). The philosophy here goes as follows: The dynamics modelled by (\ref{markovChain}) happen within a single latency interval. As we are interested in real-time control, the evolution within a single latency interval is so small that it seems reasonable to model it as a pure perturbation. This practice of replacing "insignificant" dynamics by random perturbations is quite common in stochastic simulation and there are good reasons for doing it \cite{Schueller07}.
\end{rem}

\begin{comment}
\begin{rem}
    It would also be interesting to understand how the model or failure probability
    \[
    \P( \F(X_0) = \cdots = \F(X_{N-1})=0)
    \]
    depends on the size of the perturbations $(\delta_{k})_k$ in case (I) and $(\tilde{\delta}_{k})_k$ in case (II). One way to approach this, would be to allow this size to be random in itself, but finding the correct way of doing sensitivity analysis for this model would be an interesting and useful problem to solve.
\end{rem}
\end{comment}

\subsection{The concurrent design model}
\label{parallel_interpretation}
Today, embedded platforms often offer multiple CPU cores, so it is natural to ask if this can be utilized to robustify $c$. We can do this by using $c$ to design a new "composite" control system running on $N\geq 2$ threads concurrently. Thread number $k$ evaluates $c$ on $y_k$, where
\[
y_k = 
\begin{cases}
x , & \text{if} \ k = 1 \\
x + \delta_{k}, & \text{if}\ k>1
\end{cases}
\]
and $\delta_{k}$ is a perturbation. The $\delta_k$s are generated such that they are pairwise independent and the final control input produced is $c(y_k)$ where $k$ is the smallest thread number for which $\mathbf{F}(y_k)=1$. If such a $k$ doesn't exist, then the system outputs $c(y_N)$. Schematically, we can think of our new control system (in case $N=3$) as shown in Figure \ref{parallel_schematic}.
\begin{figure}[H]
\centering
\[
\vcenter{\hbox{\xymatrix{                   & c(y_1) \ar[rd]          & \\
           x \ar[ru] \ar[rd] \ar[r] & c(y_2) \ar[r]  & u  \\
                             & c(y_3) \ar[ru] &  }}}
=
\begin{cases}
c(y_1),  & \text{if} \ \mathbf{F}(y_1)=1\\
c(y_2), & \text{if} \ \mathbf{F}(y_2)=1 \ \& \ \mathbf{F}(y_1)=0\\
c(y_3), & \text{otherwise}
\end{cases}
\]
\caption{Schematic overview of the concurrent control system.}
\label{parallel_schematic}
\end{figure}
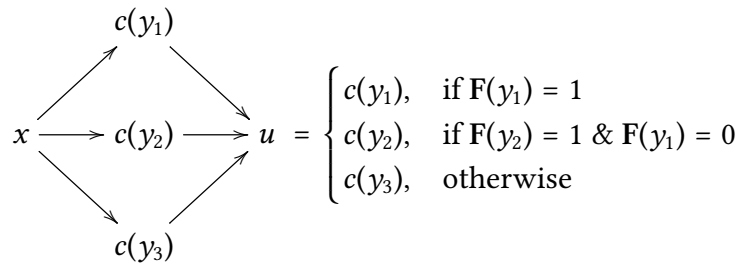
What's important for this control system to make sense is that $c(x + \delta_k)$ is a \emph{good} control at the state $x$ if $\mathbf{F}(x+\delta_k)=1$. We can think of two ways of ensuring this:
\begin{enumerate}[label=(\alph*)]
    \item
    The state $x$ will in many real-world applications be an estimate of the true state of our dynamical system, perhaps obtained using a Kalman filter. Hence, if all the $\delta_k$s are sufficiently small it is reasonable to use $x+\delta_k$ as an estimate for the current true state.
    \item
    In many cases one can think of $c(x)=c(x,x)$ as depending on $x$ in two components. The first component prescribes the constraints of the control problem and the second doesn't. In particular, in such a situation, for any function $h:\mathbf{X} \to \mathbf{X}$, the control produces by $c(x,h(x))$ is still a \emph{good} control at the point $x$ if $\mathbf{F}(x,h(x))=1$. In this situation one can simply perturb the second component on the different threads in Figure \ref{parallel_schematic}.
\end{enumerate}

\begin{rem}
One obvious example that falls into the case (b) is when $c$ is based on embedded optimization. Here, the optimization solver requires at run-time both an input specifying the constraints of the optimization problem (the first input) as well as an initial guess for the algorithm (the second input). In particularly this shows that any system based on embedded optimization fits into the case (b).
\end{rem}

\begin{rem}
Note that the composite system outlined in Figure \ref{parallel_schematic} is stochastic as opposed to the original deterministic $c$. In particular, it might return different outputs given identical inputs. However, the new system is at least as reliable as $c$.
\end{rem}

This trick allows for parallelizing to any number of threads because of the negligible overhead caused by performing the perturbation. The real limiting factor here will be the increased cost of hardware. Given our initial $X_1$ we model $x_k, \ k > 1$ by random variables given by
\begin{equation}
\label{modifiedMarkovChain}
X_k=X_1+\delta_{k}, \quad k=2,\ldots, N. 
\end{equation}
The main difference between this case and the latency budget model outlined above is that $X_k$ ($k>1$) is a perturbation of $X_1$ rather than of $X_{k-1}$. 
Below we will refer to the statistical model given by (\ref{modifiedMarkovChain}) as the \emph{concurrent design model}.

\begin{comment}
As a consequence, in this case (\ref{MarkovProperty}) is replaced by 
\begin{equation}
\label{modifiedMarkovProperty}
\P(A_k| \cap_{j<k} A_j)=\P(A_k \ |\ A_1), \quad k = 2,\ldots, N.
\end{equation}
\end{comment}

%%%%%%%%%%%%%%%%%%%
\section{A decomposition technique and subset simulation}
As the aim if this paper is to provide practical methods for verifying reliability of real-time control systems, one expects the probability of the event (\ref{failure_event}) to be extremely small. Hence, we are dealing with estimating the probability of a \emph{rare event}.
A fundamental challenge in Monte Carlo simulation is that (in the simplest setting), to verify $\P(Z \in A) \approx p$ for a random variable $Z$, $p\in (0,1)$ and $A\subset \X$ one needs to simulate $\approx \frac{1}{p}$ independent samples of $Z$ \cite[Chapter III]{AsmussenGlynn07} (see also appendix \ref{MCappendix}). Hence, in the case of a rare event $(0 < p \ll 1)$ the required compute becomes prohibitively large. A typical strategy when dealing with rare event simulation is the use of variance reduction techniques \cite[Chapter V]{AsmussenGlynn07}, \cite[Chapter 8]{Owen13}. These often come at the price of biasing estimators. In this section we discuss how to estimate the probability of the failure event (\ref{failure_event}) using the \emph{subset simulation} technique \cite{AuBeck01, AuWang2014}. This provides an estimator for the probability of (\ref{failure_event}) with small bias and good convergence properties. This renders the probability of (\ref{failure_event}) computationally feasible even when it is extremely small (e.g. $\approx 10^{-15}$).

\begin{prop}
\label{prop1}
In both model (\ref{markovChain}) and (\ref{modifiedMarkovChain}) we have
\begin{equation}
\P (\cap_k A_k ) = \P(A_1) \prod_{k=2}^{N} \P(A_k | \cap_{l<k} A_l)
\end{equation}
\end{prop}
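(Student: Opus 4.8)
The plan is to observe that the asserted identity is nothing but the general \emph{chain rule} (telescoping product) for conditional probabilities, valid for \emph{any} finite sequence of events $A_1, \ldots, A_N$ in any probability space. In particular it does not use the specific transition structure of either (\ref{markovChain}) or (\ref{modifiedMarkovChain})—those models only become relevant if one wishes afterwards to \emph{simplify} each factor $\P(A_k \mid \cap_{l<k} A_l)$ via a Markov property. I would therefore prove it by a short induction on $N$, invoking only the definition of conditional probability, $\P(B \mid C) = \P(B \cap C)/\P(C)$ for $\P(C) > 0$.

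For the base case $N = 2$, the definition gives $\P(A_1 \cap A_2) = \P(A_2 \mid A_1)\,\P(A_1)$ immediately. For the inductive step, assume the formula for $N-1$ events, set $B := \cap_{k=1}^{N-1} A_k$, and apply the definition of conditional probability to the pair $(A_N, B)$ to obtain $\P(\cap_{k=1}^{N} A_k) = \P(A_N \mid B)\,\P(B)$. Substituting the inductive hypothesis for $\P(B)$ and noting $B = \cap_{l<N} A_l$ yields exactly the claimed product.

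There is essentially no deep obstacle here; the one point deserving care is the \emph{well-definedness} of each conditional probability, namely that $\P(\cap_{l<k} A_l) > 0$ for every $k$. Since the prefix intersections are nested and decreasing, $A_1 = \cap_{l<2}A_l \supseteq \cdots \supseteq \cap_{l<N} A_l$, it suffices to assume positivity of the smallest one, $\P(\cap_{l<N} A_l) > 0$. This is harmless in the intended application: the quantity $\P(\cap_k A_k)$ is precisely the (small but strictly positive, in any nondegenerate model) failure probability (\ref{failure_event}) we wish to estimate, and its positivity forces positivity of every prefix. Should one wish to dispense with the assumption entirely, one adopts the standard convention that any factor conditioned on a null prefix contributes $0$; then both sides equal $0$ and the identity still holds.
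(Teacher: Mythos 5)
Your proof is correct and follows essentially the same route as the paper, whose entire proof reads ``Simply use the definition $\P(\cdot\ |\ \cdot)$ inductively''; you have merely written out that induction explicitly. Your added remark on well-definedness (positivity of the prefix probabilities $\P(\cap_{l<k}A_l)$) is a reasonable point of care that the paper leaves implicit, but it does not change the argument.
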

\begin{proof}
Simply use the definition $\P(\cdot \ |\  \cdot )$ inductively.
\end{proof}
This proposition allows us to "split" the failure event (\ref{failure_event}) into $N$ sub-events 
\begin{equation}
\label{sub_events}
(A_1) \quad \text{and} \quad (A_k | \cap_{l<k}A_l), \quad k=2,\ldots, N.
\end{equation}
We will estimate the probability of each of these events separately, denoting by $Z_1$ an estimator for $\P( A_1 )$ and by $Z_k$ an estimator for $\P ( A_k | \cap_{l<k}A_l )$ for each $k=2,\ldots, N$. 
\begin{prop}
\label{prop2}
Assume that
\begin{enumerate}[label=(\alph*)]
    \item 
    All the $Z_k$s are centered, i.e.
    \[
    \E(Z_1) = \P(A_1)\ \text{and} \ \E(Z_k) = \P( A_k | \cap_{l<k}A_l ), \ k=2,\ldots, N.
    \]
    \item
    All the $Z_k$s are independent.
\end{enumerate}
Then 
\begin{equation}
    \label{estimator}
    Z:=Z_1 \cdots Z_{N}
\end{equation}
is a centered estimator for $\P( \cap_k A_k )$. Moreover, if $K > 0$ such that 
\begin{equation}
\label{variance_condition}
(2^N - 2)\E(Z_k)^2\leq K\V(Z_k) \quad \forall \ k=0,\ldots, N-1 
\end{equation}
then 
\begin{equation}
\label{variance_estimate}
\V(Z)\leq (1+K^{N-1})\V(Z_1)\cdots \V(Z_N).
\end{equation}
\end{prop}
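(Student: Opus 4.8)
The plan is to handle the two assertions separately: the unbiasedness is immediate from independence and Proposition \ref{prop1}, while the variance bound carries all the content. For centeredness, since $Z=Z_1\cdots Z_N$ is a product of the independent variables of assumption (b), we get $\E(Z)=\prod_{k=1}^N\E(Z_k)$, and assumption (a) rewrites the right-hand side as $\P(A_1)\prod_{k=2}^N\P(A_k\mid \cap_{l<k}A_l)$, which equals $\P(\cap_k A_k)$ by Proposition \ref{prop1}. Hence $Z$ is a centered estimator for $\P(\cap_k A_k)$.

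For the variance I would exploit that independence also controls second moments. Writing $v_k:=\V(Z_k)$ and $m_k:=\E(Z_k)^2$, independence gives $\E(Z^2)=\prod_{k=1}^N\E(Z_k^2)=\prod_{k=1}^N(v_k+m_k)$ and $\E(Z)^2=\prod_{k=1}^N m_k$, so
\[
\V(Z)=\prod_{k=1}^N(v_k+m_k)-\prod_{k=1}^N m_k .
\]
Expanding the first product over subsets $S\subseteq\{1,\dots,N\}$ as $\sum_S\prod_{k\in S}v_k\prod_{k\notin S}m_k$, the term $S=\emptyset$ equals $\prod_k m_k$ and cancels the subtracted product, while the term $S=\{1,\dots,N\}$ is exactly the desired main term $\prod_k v_k=\V(Z_1)\cdots\V(Z_N)$. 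What is left is the sum over the \emph{non-empty proper} subsets.

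The decisive observation is that there are precisely $2^N-2$ non-empty proper subsets of $\{1,\dots,N\}$, which is exactly the constant appearing in (\ref{variance_condition}); this is the signal that the hypothesis is tailored to make the combinatorics close. For such an $S$ the complement $S^c$ is non-empty, so applying $m_k\le\tfrac{K}{2^N-2}v_k$ to each $k\in S^c$ gives $\prod_{k\in S}v_k\prod_{k\notin S}m_k\le\big(\tfrac{K}{2^N-2}\big)^{|S^c|}\prod_{k=1}^N v_k$. Summing over the $2^N-2$ terms and using $1\le|S^c|\le N-1$ then yields the estimate (\ref{variance_estimate}). The main obstacle I anticipate is precisely this last step: one must decide which power of $K$ to track through the sum. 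Bounding $\big(\tfrac{K}{2^N-2}\big)^{|S^c|}\le K^{N-1}(2^N-2)^{-|S^c|}$ and using $\sum_{j=1}^{N-1}\binom{N}{j}(2^N-2)^{-j}\le 1$ recovers the stated factor $1+K^{N-1}$, but this chain needs $K\ge 1$ to replace $K^{|S^c|}$ by $K^{N-1}$. In the complementary regime $K\le 2^N-2$ the cruder bound $\big(\tfrac{K}{2^N-2}\big)^{|S^c|}\le\tfrac{K}{2^N-2}$ gives the \emph{sharper} conclusion $\V(Z)\le(1+K)\prod_k v_k$. I would therefore either record this sharper form or note the standing assumption $K\ge 1$ under which (\ref{variance_estimate}) holds as written; for $N=2$ both forms coincide, so no restriction on $K$ is needed there.
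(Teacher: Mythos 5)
Your proof follows the same route as the paper's: use independence and Proposition \ref{prop1} for centeredness, then expand $\V(Z)=\prod_{k}\bigl(\V(Z_k)+\E(Z_k)^2\bigr)-\prod_k\E(Z_k)^2$ over subsets, keep the full-subset term as the main term, and bound the $2^N-2$ remaining cross terms via (\ref{variance_condition}). The difference is that you actually carry out the combinatorial bookkeeping that the paper compresses into ``$+\cdots$'', and in doing so you surface a genuine issue: the step $K^{|S^c|}\le K^{N-1}$ requires $K\ge 1$ once $N\ge 3$, and this is not an artifact of your proof. Taking all $Z_k$ identically distributed with $\E(Z_k)^2=\tfrac{K}{2^N-2}\V(Z_k)$ and writing $t=K/(2^N-2)$ gives $\V(Z)=\V(Z_1)^N\bigl((1+t)^N-t^N\bigr)=\V(Z_1)^N\bigl(1+Nt+O(t^2)\bigr)$, which exceeds $\bigl(1+K^{N-1}\bigr)\V(Z_1)^N$ for $K$ small, so (\ref{variance_estimate}) as stated is false without $K\ge1$ when $N\ge 3$; the paper's elided step silently assumes it. Your auxiliary bound $\sum_{j=1}^{N-1}\binom{N}{j}(2^N-2)^{-j}\le 1$ is correct (each summand's denominator is at least $2^N-2$ and the binomial coefficients sum to $2^N-2$), as is the sharper conclusion $\V(Z)\le(1+K)\V(Z_1)\cdots\V(Z_N)$ in the regime $K\le 2^N-2$, and your observation that $N=2$ needs no restriction. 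Since $K\ge1$ is the only regime in which the stated bound is informative anyway, the proposition survives with that standing hypothesis, which your write-up correctly identifies and supplies.
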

\begin{proof}
Use Proposition \ref{prop1} to see that $Z$ is centered. The variance estimate comes from
\begin{align}
    \V(Z)&= \E(Z_1^2)\cdots \E(\Z_N^2) - \E(Z_1)^2\cdots \E(\Z_N)^2 \label{var_computation} \\
    &=\prod_{k=1}^{N}(\V(Z_k) + \E(Z_k)^2) - \prod_{k=1}^{N}\E(Z_k)^2 \nonumber \\
    &= \V(Z_1)\cdots \V(Z_{N}) + \cdots \nonumber \\
    &\leq \V(Z_1)\cdots \V(Z_{N}) + K^{N-1}\V(Z_1)\cdots \V(Z_{N}) \nonumber \\
    &= (1+K^{N-1})\V(Z_1)\cdots \V(Z_{N}). \nonumber
\end{align}
\end{proof}
The point of Proposition \ref{prop2} is that we can build an unbiased estimator $Z$ for $\mathbb{P}(\cap_{k}A_k)$ from the unbiased estimators $(Z_k)_k$. Moreover, (\ref{variance_estimate}) provides a way to estimate the variance of $Z$ in an unbiased manner using unbiased and independent estimators for the $\mathbb{V}(Z_k)$s. This can be used to compute a rigorous confidence interval for $\mathbb{P}(\cap_{k}A_k)$ using a concentration inequality \cite{Boucheron13}. Typically however, it will be more practical to simply compute confidence intervals based on each $Z_k$ using the central limit theorem and combine these intervals into a confidence interval for $Z$. In fact in all examples below this is how we will compute confidence intervals for $\mathbb{P}(\cap_{k}A_k)$.

Let's now explain how the above setting fits into the subset simulation framework presented in \cite{AuBeck01}. For this define 
\[
F_k := \bigcap_{l\leq k} A_l, \quad k=1,\ldots ,N,
\]
so that $F_1 \supset \cdots \supset F_N$. Then the sub-events (\ref{sub_events}) can be written
\[
F_1 = A_1 \quad \text{and} \quad (F_k\ |\ F_{k-1}) = ( A_k\ |\ \cap_{l<k}A_l ), \quad k=2,\ldots, N.
\]
This is exactly the situation covered in \cite{AuBeck01}, where estimators $Z_k$ for the $\P(F_k\ |\ F_{k-1})$s are constructed. Each of these estimators are unbiased, but they are pairwise dependent because realizations of $Z_{k-1}$ are used as initial conditions for the Markov chain simulation of $Z_k$. This in turns causes the product $Z = Z_1\cdots Z_N$ to be biased. However, as explained in the following result, this bias is negligible and $Z$ is asymptotically unbiased with good convergence properties.

\begin{prop}[ {\cite[Proposition 1 and 2]{AuBeck01}} ]
\label{prop_au_beck}
     Let $Z_k$ be the estimator for $\P(F_1)$ if $k=1$ and for $\P(F_k\ |\ F_{k-1})$ if $k>1$, which are constructed in \cite[Section 4]{AuBeck01}. Then, with $Z:=Z_1 \cdots Z_N$ we have
     \begin{align*}
        | \E\left( \frac{Z - P}{P} \right) | &= O(M) \\
        \E\left( \frac{Z - P}{P} \right)^2 &= O(M),
     \end{align*}
     where $P:=\P(\cap_{k}A_k)=\P(F_N)$ is the failure probability and $M$ is the total number of samples.
\end{prop}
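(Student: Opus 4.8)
Because this proposition merely records \cite[Propositions~1 and 2]{AuBeck01}, my aim is to recall the mechanism of the argument rather than reproduce it in full. The plan is to write each level estimator in multiplicative-error form and to exploit that the Metropolis--Hastings sampler used at level $k$ in \cite[Section~4]{AuBeck01} has the correct level-$k$ conditional law as its stationary distribution. First I would set $p_1 := \P(F_1)$ and $p_k := \P(F_k \mid F_{k-1})$ for $k \geq 2$, so that $P = p_1 \cdots p_N$ by Proposition~\ref{prop1}, and introduce the relative errors $\epsilon_k := (Z_k - p_k)/p_k$. Since each $Z_k$ is individually unbiased---its seeding chain is launched from states already distributed according to the level-$(k-1)$ conditional law, so stationarity holds---one has $\E(\epsilon_k) = 0$, and moreover $\E(\epsilon_k^2) = O(M_k^{-1})$ where $M_k$ is the sample budget at level $k$, so that each $\epsilon_k$ is a fluctuation of size $M^{-1/2}$. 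The relative error of the product then expands as a finite sum in these small quantities:
\[
\frac{Z - P}{P} = \prod_{k=1}^{N}(1 + \epsilon_k) - 1 = \sum_{k} \epsilon_k + \sum_{j<k} \epsilon_j \epsilon_k + \cdots .
\]

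For the bias I would take expectations term by term. All single-index terms vanish since $\E(\epsilon_k)=0$, so to leading order the bias is built from the pairwise covariances $\E(\epsilon_j\epsilon_k)$ with $j\neq k$ (triple and higher products being of strictly smaller order in $M^{-1}$). These covariances are nonzero only because the seeds for level $k$ are the level-$(k-1)$ samples that landed in $F_{k-1}$; this reuse is the sole source of dependence between the $Z_k$. The crucial estimate is that each such covariance decays like $M^{-1}$ uniformly in the level indices, obtained by controlling the correlation induced by the shared seeds via the geometric ergodicity of the conditional samplers. Summing the $O(N^2)$ pair terms, with the number of levels $N$ held fixed, yields $|\E((Z-P)/P)| = O(M^{-1})$; in particular the relative bias vanishes as $M \to \infty$, which is the precise sense in which $Z$ is asymptotically unbiased (so the $O(M)$ recorded in the statement is to be read as this $M^{-1}$ decay, in line with the good convergence asserted before the proposition).

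For the mean-square relative error I would instead square the expansion before taking expectations. The dominant contribution is $\sum_k \E(\epsilon_k^2)$, the sum of the per-level squared coefficients of variation, each of order $M_k^{-1}$ by the standard variance bound for the time-average of a stationary, geometrically ergodic Markov chain. Since $M = \sum_k M_k$ and $N$ is fixed, this sum is $O(M^{-1})$, while every remaining term (a product of two or more $\epsilon$'s, or a cross term already controlled by Cauchy--Schwarz through the $\E(\epsilon_k^2)$) is of strictly higher order; this gives $\E((Z-P)/P)^2 = O(M^{-1})$.

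The main obstacle is the covariance control underlying the bias step: quantifying how much correlation the level-$k$ estimator inherits from its level-$(k-1)$ seeds. This is where the analysis of \cite{AuBeck01} does the real work, using stationarity together with the mixing of the conditional Metropolis--Hastings chains to show that these inter-level correlations---and therefore the overall bias---are one order smaller in the sample count than the variance itself, so that $Z$ may be treated as unbiased in practice.
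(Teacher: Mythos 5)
The paper offers no proof of this proposition: it is imported verbatim, with attribution, from \cite[Propositions 1 and 2]{AuBeck01}, so there is no internal argument to compare yours against. That said, your reconstruction does track the actual argument in that reference: write $Z_k = p_k(1+\epsilon_k)$, use the per-level unbiasedness (the seeding states are already distributed according to the relevant conditional law, so the chains are stationary) to kill the linear terms, and then bound the cross terms $\E(\epsilon_j\epsilon_k)$ for the bias and the diagonal terms $\E(\epsilon_k^2)$ for the mean-square relative error, each of order $M^{-1}$ with the number of levels $N$ held fixed. Two remarks. First, you are right to read the displayed $O(M)$ as $O(M^{-1})$: as literally written the bound is contentless, the cited result in \cite{AuBeck01} is an $O(1/M)$ decay, and the surrounding prose (``negligible bias'', ``asymptotically unbiased'') confirms that the decay is what is intended --- this is a typo in the statement, and your proof correctly proves the intended claim. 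Second, the covariance control you single out as ``the real work'' is in \cite{AuBeck01} obtained simply by Cauchy--Schwarz, $|\E(\epsilon_j\epsilon_k)|\le (\E\epsilon_j^2)^{1/2}(\E\epsilon_k^2)^{1/2}=O(M^{-1})$, once each per-level second moment is controlled; the substantive hypothesis is that each MCMC time average has variance $O(M_k^{-1})$, which Au and Beck get by positing a bounded autocorrelation factor for the conditional chains rather than by establishing geometric ergodicity. With that caveat, your sketch is a faithful account of why the result holds.
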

The main point in this result is that the estimator $Z$ for $\P(F_N)$ proposed in \cite{AuBeck01} both has good convergence results and a negligible bias. For further details on the constructions of the $Z_k$s we refer to section \ref{sec_defining_the_estimators} and remark \ref{inital_conditions} below.

\section{Making the models computational}
\label{model_section}
Let's discuss how to implement a statistical framework based on our statistical models. To do this we will, throughout the rest of this paper, assume either 
\begin{enumerate}[label=\textbf{A\arabic*}]
\item \label{assumption1}
$\X = \X_I \times \X_M \times \X_D$, where $\X_I$ is a product of closed intervals, $\X_M$ is a boundary-less submanifold of euclidean space from which we can sample and perturb uniformly and $\X_D$ is a discrete subset from which we can sample uniformly, or
\item \label{assumption2}
$\X\subset (\X_I \times \X_M \times \X_D)^m$ is a subset of the $m$-fold product of such a space with the condition that 
\[
x = (x^1,\ldots , x^m)\in \X
\]
exactly when 
\begin{equation}
    \label{AssumtionBAdditional}
    |x^i_J-x^j_J| > r_{ij} \geq 0 \quad \forall  \ i\neq j,
\end{equation} 
here $J$ denotes a subset of the indices of the components of $x\in \X$ and $|\cdot |$ denotes a distance on those variables.
\end{enumerate}

\begin{rem}
    The case of \ref{assumption2} is a natural generalization of \ref{assumption1} to the case when there are multiple actors/obstacles within the same world. E.g. if we are trying to control a quadrotor (see subsection \ref{quadcopterExample}) flying inside a box $[-50, 50]^3\subset \R^3$, then \ref{assumption2} corresponds to having multiple quadrotors inside $\X$ and (\ref{AssumtionBAdditional}) corresponds to the natural requirement of collision avoidance (by choosing $J$ to correspond to the position variables).
\end{rem}

Assumptions \ref{assumption1} and \ref{assumption2} allow us to sample uniformly from $\X$ as well as perturb within $\X$. We will denote by $U$ the uniform distribution on $\X$. In the case of \ref{assumption2} sampling from $U$ is done by applying the Metropolis algorithm, see \cite{Metropolis1953}.
We will now explain how to perturb:
Under \ref{assumption1}, given $x=(x_I, x_M, x_D)\in \X_I \times \X_M \times \X_D$ and $r = (r_I, r_M, r_D)\in [0,\infty)^3$ we will denote by $\mathcal{D}(x, r)$ the distribution given as follows: On the $\X_M$ and $\X_D$ components it is just a uniform perturbation of radius $r_M$ and $r_D$ respectively. On the $\X_I$ component it is also uniform perturbation of radius $r > 0$ with the condition that the trajectory from $x_I$ is reflected at the boundaries of $\X_I$ like a billiard ball. In case of \ref{assumption2} we will abuse notation and also write $\mathcal{D}(x,r)$ for a product distribution, each of the $m$ components being of the type just described.

%%%%%%%%%%%%%%%%%%%%%%%%%%%%%%%%%%%%%%%%%%%%%%%%%%%%%%%%%%%%%%%%%%%%%%%
\begin{comment}
\begin{prop}
    \label{perturbationProp}
    If $X \sim U$ and $\Delta X \sim \mathcal{D}(X,r)$ then also $(X + \Delta X)\sim U$. 
\end{prop}
\begin{proof}
    It is immediate that the $M$ and $D$ components of $X + \Delta X$ are again uniformly distributed. Hence all that's needed to check is that also the $I$ component also is. This is quite obvious. 
\end{proof}

\begin{rem}[Relevance of \ref{assumption1} and \ref{assumption2}]
    % Probably this remark should be deleted.
    \label{distribution_remark}
    Assumptions \ref{assumption1} and \ref{assumption2} suffice for many real-world applications of real-time control, but certainly not all. It is therefore important to note that these assumptions are not strictly necessary for the analysis we do below. The main requirement is that one can in a reasonable way sample from $\X$ and perturb within $\X$. There are many challenging control problems where even defining a suitable $\X$ to sample from is challenging (for example some autonomous driving applications). However, if one can reasonably define such an $\X$ then it is likely the methods presented here can be applied (perhaps in a slightly modified form).
\end{rem}
\end{comment}
%%%%%%%%%%%%%%%%%%%%%%%%%%%%%%%%%%%%%%%%%%%%%%%%%%%%%%%%%%%%%%%%%%%%%%%

\subsubsection{Sampling the $X_k$s}
\label{distribution_of_x0}
Once we can sample $X_{k-1}$ for some $k\geq 2$, sampling $X_{k}$ comes down to sampling $\delta_{k}$. We do this by simulating $\delta_k \sim \mathcal{D}(X_{j}, r)$ for some $r$, with $j=k-1$ in the latency budget model and $j=1$ in the concurrent design model. Note that within the concurrent design model this sample follows the "true" distribution if this is also how the perturbation within the composite system is implemented. In the case of the latency budget model this choice of perturbation is justified if the dynamics within a single latency interval are very insignificant (because $T$ is small compared to the speed of the real-world dynamics).

As mentioned in sections \ref{latency_budget_model} and \ref{section_generating_system_states} one aspect of our analysis is simulating the random variable $X_1$, representing the system state within a random latency interval. The synthetic and easiest solution is to simply choose $X_1 \sim U$. However, in order to yield a direct interpretation in terms of \emph{expected time until failure} one must sample from the "real-world distribution" of $X_1$. The most general situation under which this can be done within the current framework is when this distribution is described by a density $f_{X_1}$ with respect to $U$ on $\X$. In this case, under \ref{assumption1}, we will sample $X_1$ using the \emph{independence sampler} \cite[Chapter XIII, Section 4a]{AsmussenGlynn07}. This is a variant of the Metropolis-Hastings algorithm which produces a sequence $(Y_j)_{j=1}^K$ whose asymptotic distribution is that of $X_1$. The update rule is particularly simple in this case. Given $Y_j$ the update evolution is prescribed by the following rule: Sample $W \sim U$ and set 
\begin{equation}
\label{independece_sampler}
Y_{j+1}=
\begin{cases}
W, & \text{with probability}\ p:=\min\left( 1, \frac{f_{X_1}(W)}{f_{X_1}(Y_j)}\right) \\
Y_j, & \text{with probability}\ 1-p
\end{cases}
\end{equation}
One can also do a similar sampling technique under \ref{assumption2} by sampling $X_1$ from the symmetric random walk Metropolis algorithm \cite[Chapter XIII, Section 4.b]{AsmussenGlynn07}. However, we will not discuss this further here. In fact in the examples below we will exclusively sample from the uniform distribution on $\X$.

\subsection{Defining the estimators}
\label{sec_defining_the_estimators}
Given that we can now simulate the $X_k$s, let's discuss how to define the estimators $(Z_k)_{k=1}^{N}$ in Propositions \ref{prop2} and \ref{prop_au_beck}.
$Z_1$ will be a standard Monte Carlo estimator. Since we can sample from the distribution of $X_1$ (either $U$ or $f_{X_1}\cdot U$, see section \ref{distribution_of_x0}), the obvious candidate is
\begin{equation}
\label{z0_estimator}
Z_1=\frac{1}{K}\sum_{j=1}^{K} \1 \{ \F(Y_j) = 0 \}.
\end{equation}
Here $(Y_j)_{j\in \N}$ is a sequence of random variables, either\footnote{In the terminology of \cite{AsmussenGlynn07} this is the \emph{crude} Monte Carlo estimator for $\P( \F(X_1) =0 )$.} I.I.D. with $Y_j \sim U$, or whose asymptotic distribution is that of $X_1$.

The estimators $Z_k,\ k\geq 2$ will be built using a \emph{batched} version of the classical random walk Metropolis algorithm (RWM) \cite{Metropolis1953} \cite{Mackay02} \cite[Chapter XIII, Section 3]{AsmussenGlynn07}. To see how this comes about, denote by $P_k$ the joint distribution of $(X_1,\ldots, X_k)$ for $k=1,\ldots, N$. We can then write
\begin{align}
\P(A_k | \cap_{l<k}A_l) &= \frac{1}{\P ( \cap_{l<k}A_l )} \int_{\{ \cap_{l<k}A_l \}} \1 \{ \F(X_{k-1}+\Delta Y)=0 \} \ d\P \label{cond_prob}\\
&= \int \1 \{ \F(Y_{k-1}+\Delta Y)=0 \} \ d\P, \nonumber
\end{align}
where\footnote{Given a probability measure $Q$ and a measurable set $A\subset \X$ with $Q(A)>0$, we denote by $Q|_A$ the probability measure given by $Q|_A(B)=\frac{Q(A \cap B)}{Q(A)}$.} $Y=(Y_1,\ldots, Y_{k-1}) \sim P_{k-1}|_{\{ \cap_{l<k}A_l \}}$ and $\Delta Y \sim \mathcal{D}(Y_{k-1},r)$. Hence, the obvious estimator for $\P(A_k | \cap_{l<k}A_l)$ is 
\begin{equation}
\label{zk_estimator}
\frac{1}{K}\sum_{j=1}^K \1 \{ \F( Y_{j,k-1} + \Delta Y_j )=0 \}
\end{equation}
where $(Y_j, \Delta Y_j)_{j=1}^{K}$ is a sequence of random variables with
\begin{equation}
\label{sampling_sequence}
Y_j \sim P_{k-1}|_{\{ \cap_{l<k}A_l \}} \quad \text{and} \quad \Delta Y_j\sim \mathcal{D}(Y_{j},r) \quad \forall \ j\in \N
\end{equation}
for some $r$. The challenging bit here is sampling from $P_{k-1}|_{\{ \cap_{l<k}A_l \}}$.

\begin{ex}
    \label{simple_sampling}
    There is one simple (albeit inefficient) way to produce the $Y_j$s: Sample independent samples of $(X_1,\ldots, X_{k-1})$ (perhaps using (\ref{independece_sampler})) and record the samples for which $\F(X_1)=\cdots = \F(X_{k-1})=0$. The sequence thus recorded is a collection of independent samples from the distribution $P_{k-1}|_{\{ \cap_k A_k \}}$. Although highly inefficient, this "brute-force" approach provides a way for computing convergence diagnostics (see section \ref{conv_diag_sec} below).
\end{ex}

We will now discuss how to sample $P_{k-1}|_{\{ \cap_{l<k}A_l \}}$ in an efficient way using RWM. More precisely we use RWM to produce a trajectory $(y_j)_{j=1}^K$ of a Markov chain whose asymptotic distribution is that of $P_{k-1}|_{\{ \cap_{l<k}A_l \}}$ (i.e. this will be the realization of the $Y_j$s above). Recall that in RWM the perturbations $t$ of the Markov chain produced by RWM are realizations of a \emph{proposal distribution}, see e.g. Appendix \ref{MCappendix}. The perturbation is then subsequently accepted/rejected with a given probability. 

In short, given the current iterate $y_j=(y_{j,1},\ldots, y_{j,k-1})$ we sample $t \in \mathcal{D}(y_{j,1}, r_{RWM})$ (for some chosen $r_{RWM}$) which defines the first coordinate of the temporary iterate $y^{tmp}_{j+1}$, $y^{tmp}_{j+1, 1}:=y_{j}+t$. The subsequent coordinates of $y^{tmp}_{j+1,l}, \ l \geq 2$ are generated either by the evolution (\ref{markovChain}) or (\ref{modifiedMarkovChain}), depending on which model we are using. The iterate is then updated according to the rule\footnote{N.B. in the case where $X_1\sim U$ this sets $y_{j+1}=y^{tmp}_{j+1}$ if and only if $\F(y^{tmp}_{j+1,1})=\cdots=\F(y^{tmp}_{j+1,k-1})=0$.}
\begin{equation}
\label{rwm_update}
y_{j+1} = 
\begin{cases}
y^{tmp}_{j+1}, & \text{with probability}\ p:=\frac{\1\{ \F(y^{tmp}_{j+1,1})=\cdots=\F(y^{tmp}_{j+1,k-1})=0 \}f_{X_1}(y^{tmp}_{j+1,1})}{f_{X_1}(y_{j,1})} \\
y_j, & \text{with probability}\ 1-p.
\end{cases}
\end{equation}
The choice of $r_{RWM}$ is a design choice. Note that sampling $(Y_j)_{j\in \N}$ in (\ref{sampling_sequence}) as a Markov chain means that also 
\[
( Y_{j,k-1} + \Delta Y_j)_{j\in \N} 
\]
is a Markov chain, so the Markov Chain version of the Central Limit Theorem applies and we can compute approximate confidence intervals for (\ref{cond_prob}) based on (\ref{zk_estimator}), using a normal distribution, if we have a suitable estimator for the variance term.
For the variance estimate we use the method of batch means, which simply means that the estimator $Z_k$ for $\P(A_k | \cap_{j<k}A_j)$, that we will use, is given by 
\begin{equation}
\label{zk_estimator_batch}
Z_k = \frac{1}{M} \sum_{i=1}^M \overline{Z}^i_{k},
\end{equation}
where each $\overline{Z}^i_{k}$ is an estimator of the type (\ref{zk_estimator}). 

\begin{rem}
\label{inital_conditions}
When the initial conditions $(y_1^i)_{i=1}^M$ for the simulations of the $\overline{Z}^i_{k}$s are obtained by "honest", independent samples from $P_{k-1}|_{ \{ \cap_{j<k}A_j \} }$, obtained via the strategy in Example \ref{simple_sampling}, then all the $Z_k$s are pairwise independent and we are in the case of Proposition \ref{prop2}. If instead the samples from $Z_{k-1}$ which happen to land in $\cap_{l<k}A_l$ are used as initial conditions for the $\overline{Z}^i_{k}$s, then the $Z_k$s are no longer pairwise independent. However, this is exactly the case covered by Proposition \ref{prop_au_beck}, which guarantees good statistical properties.
\end{rem}

When estimating $\P(A_k | \cap_{j<k}A_j)$ using (\ref{zk_estimator_batch}) we use the following batch variance estimate
\begin{equation}
    \label{batch_variance_estimate}
    V_k = \frac{1}{M-1} \sum_{i=1}^M (\overline{Z}^i_k - Z_k)^2.
\end{equation}
With the estimators $Z_k$ and $V_k$ at hand an approximate $\alpha$-confidence interval for $\P(A_k | \cap_{j<k}A_j)$ is given by 
\begin{equation}
\label{student_t_confidence_interval}
z_k \pm t^{\alpha + (\tfrac{1-\alpha}{2})}_{M-1}\sqrt{\tfrac{v_k}{M}},
\end{equation}
where $z_k$ is the realization of (\ref{zk_estimator_batch}), $v_k$ is the realization of (\ref{batch_variance_estimate}) and $t^{\beta}_{N}$ is the $\beta$ percentile of the student $t$ distribution with $N$ degrees of freedom. 
See e.g. \cite[Section 11.12]{Owen13} and \cite[Section 1.10.1]{MCMCIntrobrooksGeyer} for nice discussions on this approach to computing approximate confidence intervals.

\begin{rem}
\label{batching_remark}
Note that this batching approach means that the number of samples on which our final estimator will be based on is $KM$. If we require a total of $L \gg 0$ samples to have high confidence in our statistical estimator then we must choose $K=\frac{L}{M}$. Hence, if $M=L$ then the Metropolis samples above consist of a single sample (i.e. the initial value) and our estimator $Z_k$ is an honest estimator (i.e. follows $P|_{\{ \F = 0 \}}$ exactly), at least when $P=U$ (i.e. $f_{X_0}=1$). However, taking $M=L$ will often not be possible because of the compute required (see Example \ref{simple_sampling}). However, it shows that scaling up the compute power can significantly improve the reliability of the estimators.
\end{rem}
For the convenience of the reader we summarize the algorithm for computing the value of the $\overline{Z}_k^i$s. I denote by $\mathcal{U}$ the uniform distribution on $(0,1)$.

%%%%%%%%%%%%%%%%%%%%%%%%%%%%%%%%%%%%%%%%%%%%%%%%%
\begin{comment}
once it has been made we proceed in two steps:
\begin{enumerate}[I)]
    \item 
    First sample independent samples $(x^j_0)_{j=0}^M$ from the distribution of $\E[X_{k-1}\ |\ \F(X_{k-1})=0]$ as in Example \ref{simple_sampling}.
    \item
    For each $j\in \{0,\ldots, M\}$ run the Metropolis random walk algorithm with increments $\Delta x^j_k$ sampled from $Q$. I.e. sample $\Delta x_k^j \sim Q$ and if $\F(x^j_{k}+\Delta x^j_{k})=0$ update $x^j_{k+1}:=x^j_{k}+ \Delta x^j_{k}$. If instead $\F(x^j_{k}+ \Delta x^j_{k})\neq 0$, update $x^j_{k+1}:=x^j_{k}$.
\end{enumerate}
Mathematically one should think of sequence $(x^j_k)$ as $M+1$ samples of a Markov chain. Under suitable conditions one can show this Markov chain has $\E(X_{k-1} \ |\ \F(X_{k-1})=0)$'s distribution as its stationary distribution and its empirical distribution converges to the distribution of $\E(X_{k-1} \ |\ \F(X_{k-1})=0)$ (see Remark \ref{convergence_remark} below for further discussion on these convergence statements). 
\end{comment}
%%%%%%%%%%%%%%%%%%%%%%%%%%%%%%%%%%%%%%%%%%%%%%%%%

\begin{algorithm2e}
\SetAlgoLined
\KwResult{ A sequence $(z_k)_{k=1}^K$ of realizations of $\mathds{1}\{ \mathbf{F}(Y + \Delta Y) = 0 \}$ from (\ref{zk_estimator}). }
\KwData{$y_1\in \mathbf{X}$, a sample from $P|_{\{ \mathbf{F} = 0 \}}$. $r_p$ prescribing the size of the perturbations in (\ref{markovChain}) and $r_{RWM}$ prescribing size of the perturbation of the RMW Markov chain.}
$y \leftarrow y_1$\;
\For{$1 \leq k \leq K$}{
$z_{k} \leftarrow 0$\;
Sample $\Delta y\sim \mathcal{P}(y,r_p)$ \;
\If{$\mathbf{F}( y + \Delta y ) = 0$}{
$z_{k} \leftarrow 1$ \;
}
Sample $t \sim \mathcal{D}(y,r_{RMW})$ \;
Sample $u \sim \mathcal{U}$\;
\If{$u < \min\left( 1, \frac{\mathds{1} \{ \mathbf{F}(y+t)=0 \}f_{X_1}(y+t)}{f_{X_1}(y)} \right)$}{
$y \leftarrow y + t$ \;
}
}
\caption{$\mathds{1} \{\mathbf{F} ( Y + \Delta Y) = 0 \}$ - sampling}
\label{ModifiedMetropolisAlgorithm}
\end{algorithm2e}

\begin{rem}
    Note that if $f_{X_1} \equiv 1$, so we are sampling from the uniform distribution on $\X$, then the last update occurs if and only if $\F( y_{k} + t_k ) = 0$. In fact this is the only case we will consider in the examples below. 
\end{rem}

\subsection{Convergence diagnostics}
\label{conv_diag_sec}
How do we know when the RWM has converged? This is a fundamental question in Markov Chain Monte Carlo methods and in general there is no perfect answer. Note that while $r_p$ is a property of the models (\ref{markovChain}) and (\ref{modifiedMarkovChain}) in Algorithm \ref{ModifiedMetropolisAlgorithm}, the relevant quantity concerning convergence properties of the algorithm is $r_{RWM}$. Too small values in the components of $r_{RWM}$ will cause the development of the Markov chain to halt and its will not converge to its stationary distribution. Too large values in the components of $r_{RWM}$ will cause very few updates of $y$ to happen which will also halt the Markov chain. Within the setting of this paper there is in general no criteria to ensure convergence has happened. That being said, there is the inefficient way to ensure that $Y$ is an honest sample from $P|_{\{ \F = 0 \}}$ (see Remark \ref{batching_remark}). Hence, a good way to do convergence diagnostics is to compare the statistics computed on the basis of honest samples with statistics computed from the RWM samples. The most straightforward approach (and the approach we advocate here) is to choose so-called \emph{pilot functions} $p_l:\X \to \R$, $l=1,\ldots,L$ and compare the means 
\[
\overline{p}_l(x) = \frac{1}{H}\sum_{k=1}^H p_l(x_k) \quad \text{and} \quad \overline{p}_l(y) = \frac{1}{K}\sum_{k=1}^K p_l(y_k),
\]
obtained form an honest sample $x=(x_k)_{k=1}^H$ and the RWM sample $y=(y_k)_{k=1}^K$, coming from Algorithm \ref{ModifiedMetropolisAlgorithm}. One could even compute confidence intervals based on $\overline{p}_l(x)$ using the central limit theorem and require to find $\overline{p}_l(y)$ within that confidence interval. See also \cite[Chapter XII, section 3b]{AsmussenGlynn07} and references therein for further discussions on this topic. In applications to statistical mechanics one can sometimes choose $p_l$s whose true mean $\lim_{H\to \infty}\overline{p}_l(x)$ one can predict from theoretical considerations. This will in general not the case here, so we suggest to simply choose the $p_l$s at random, e.g. choose them as polynomials of a given degree, whose coefficients are selected at random. For more ideas on how to do convergence diagnostics for Monte Carlo applications to reliability analysis, see \cite{AuBeck01, Zio14}.

\begin{rem}
    The Metropolis algorithm is one of the great scientific achievements of the last century \cite{Cipra00} and there are plenty of resources discussing its convergence properties, see in particular \cite{AsmussenGlynn07}, \cite{Owen13} and \cite{MCMCIntrobrooksGeyer} as well as the references therein for very readable accounts.
\end{rem}

\begin{comment}
\subsection{The time-homogenuous case}
A particularly nice and useful case is when (\ref{markovChain}) is time-homogenuous in the sense that all the $X_k$s share the same distribution. In this case the equality in Proposition \ref{prop1} reads
\[
\P( \F(X_0)= \cdots = \F(X_{N-1})=0 ) = \P(\F(X_0)=0 ) \P(\F(X_1) = 0 \ |\ \F(X_{0})=0)^{N-1}.
\]
Hence, regardless of how large $N$ is one only has to simulate estimators of 2 probabilities with the drawback being that the resulting estimator for $\P(\F(X_1) = 0 \ |\ \F(X_{0})=0)^{N-1}$ is biased if $N>2$. To achieve time-homogeneity one has to be in a  situation where the distribution of the $\delta_k$s can be chosen to achieve this, meaning it is ultimately a modelling issue.

\begin{ex}
\label{time_homogenuous_uniform}
Consider a box $X=[-a,a]^k$ and let $P$ denote the uniform distribution on $X$. Let $\Tilde{\delta}$ denote the uniform distribution on $(-\epsilon, \epsilon)^k$ for some small $\epsilon > 0$. Let $X_{k+1}$ denote the random variable one obtains by "reflecting" $X_k + \delta$ along the boundaries of $X$ in the following sense: Think of $[0,1]\ni t\mapsto X_k + t\delta$ as a "billiard ball" and perfectly reflect it at the boundaries. Then $X_{k+1}$ is also uniformly distributed in $X$.
\end{ex}
\end{comment}

%%%%%%%%%%%%%%%%%%%%%%%%%%%%%%%%%%%%%%%%

\subsection{Interpreting $Z$}
\label{simulation_time_considerations}
The above subsections outline a statistical framework for estimating the mean $\E(Z)$ (see (\ref{estimator})). The framework is designed in such a way that it is possible to verify that 
\[
    \P( \cap_k A_k ) = \E(Z)
\]
is extremely small with realistic computational resources. How do we interpret such a result? One would like to conclude that the \emph{expected time until failure} is extremely large. This interpretation is correct if the distribution of $X_1$ is the \emph{true} distribution of the state of our system. See also the discussion in section \ref{section_generating_system_states}.

To illustrate how this interpretation comes about in the latency budget model, assume that $X_1$ does carry the true distribution of the state of our system, that $\F$ takes $10\ms$ to evaluate on a given single core machine and that $T=25\ms$ (so $N=2$). Then choosing $K=10^7$ it will take roughly $1.17$ days to compute our estimator $Z_1$. Each iteration in Algorithm \ref{ModifiedMetropolisAlgorithm} requires two evaluations of $\F$, so taking the same $K$ here, it will take roughly an additional $2\cdot 1.17=2.34$ days to compute the estimator $Z_1$. These $\approx 3.5$ days of compute allows one, in principle, to verify $\E(Z_1) \approx 10^{-6}$ and $\E(Z_1) \approx 10^{-6}$, which in turn gives an estimate for the failure probability bounded above by $\approx 10^{-12}$.
The interpretation of such a result is that, on average, $1$ in $10^{12}$ latency intervals will experience failure, or in other words that the expected time between two consecutive failures is
\[
    10^{12}\cdot 25\ms > 792 \years.
\]
Note that this is a computation which is realistic to perform on a single core machine!

To illustrate how the "expected time until failure" interpretation comes about in the concurrent design model, suppose we are in the same case, so it takes $10ms$ to evaluate $c$ and let's say $T=10ms$, so $N = 1$. Once again taking $K = 10^{7}$, it will require $\approx k \cdot 1.17$ days to simulate $Z_k$ (because it requires $k$ evaluations in each loop of Algorithm \ref{ModifiedMetropolisAlgorithm}). This allows us in principle to verify 
\[
    \E(Z_k) \approx 10^{-6} \quad k > 0.
\]
Hence, if our "composite control system" runs $c$ on three threads (as in Figure \ref{parallel_schematic}) one can, in principle, verify a failure probability of $\approx 10^{-18}$ with $\approx (1+2+3)\cdot 1.17=7.02$ days of compute. This failure probability translates into the following expected time between two failure events:
\[
    10^{18} \cdot 10\ms > 317\cdot 10^{6} \years.
\]

%%%%%%%%%%%%%%%%%%%%%%%%%%%
\section{Examples}
Here we discuss a few examples. The $c$ in each of these examples is a nonlinear MPC controller implemented in \href{https://www.embotech.com/products/forcespro/overview/}{FORCESPRO}. 

\subsection{A toy example}
Consider the following control problem: Given an initial condition $x_{0}\in \X:=[-8,8]^2$ we wish to steer our dynamical system to the origin $(0,0)\in \X$ using a 1-dimensional control input $u$. The dynamics are given by the differential equations
\begin{align*}
\dot{x}_1 &= x_2 \\
\dot{x}_2 &= u( 1 - x_1^2 )x_2 - x_1
\end{align*}
This system is often called the \href{https://en.wikipedia.org/wiki/Van_der_Pol_oscillator}{Van der Pol oscillator}. Our control algorithm is a 10-stage interior point NLP solver formulated in FORCESPRO. As objective function we use 
\[
10^{-5}u^2 + x_1^2 + x_2^2
\]
on each stage and sum up the contribution of the different stages.
Note that even though the objective function is convex, the optimization problem is clearly non-convex. The FORCESPRO solver requires two inputs whenever we call it: The current state of our system (an element of $\X$) as well as an initial guess for the primal variable of our optimizations algorithm (this is a 30 dimensional vector).

\begin{rem}
    The Van der Pol oscillator might seem like a simple system because the state space is 2 dimensional, but the dynamics are known to be unstable and the origin is known to be an unstable equilibrium. Moreover the optimization problem is clearly non-convex, so in that sense the problem contains all the theoretical ingredients required to be a challenging control problem.
\end{rem}

\begin{rem}
    The simulations in this example were performed on a laptop PC with a four (i5-3380M) core Intel CPU, 2.90 GHz, 3-4MB cache, running 64-bit Ubuntu 20.04.1 LTS.
\end{rem}

\subsubsection{Convergence diagnostics}
As this system is two dimensional we can have a chance to investigate what a good value for $r_{RWM}$ is. This has been done by comparing visually the distribution of the restriction to $\{\F=0\}$ of the uniform probability distribution on $\X$ with samples of this probability measure obtained by the RWM. In Figure \ref{vanDerPolDistFig} we visualize the distribution of $\1 \{ \F(X_1) = 0 \}$ via a histogram based on $10^9$ uniformly distributed samples in $\X$, 305 of which landed in $\{ \F = 0 \}$. As can be seen points at which $\{ \F = 0 \}$ are extremely sparse, so the probability of getting two consecutive samples in this set is extremely unlikely. Figure \ref{metropolisVanDerPolDistFig} shows the empirical distribution obtained by running the RWM with $r_{RWM}=1.0$ starting from 31 initial conditions in $\{\F=0\}$. These were obtained by sampling uniformly $10^7$ samples in $\X$ and collecting the samples landing in $\X$. As can be seen from the figures the RWM is closed to having converged. 

\begin{figure}
\label{vanDerPolDistFig}
\centering
\includegraphics[scale=0.6]{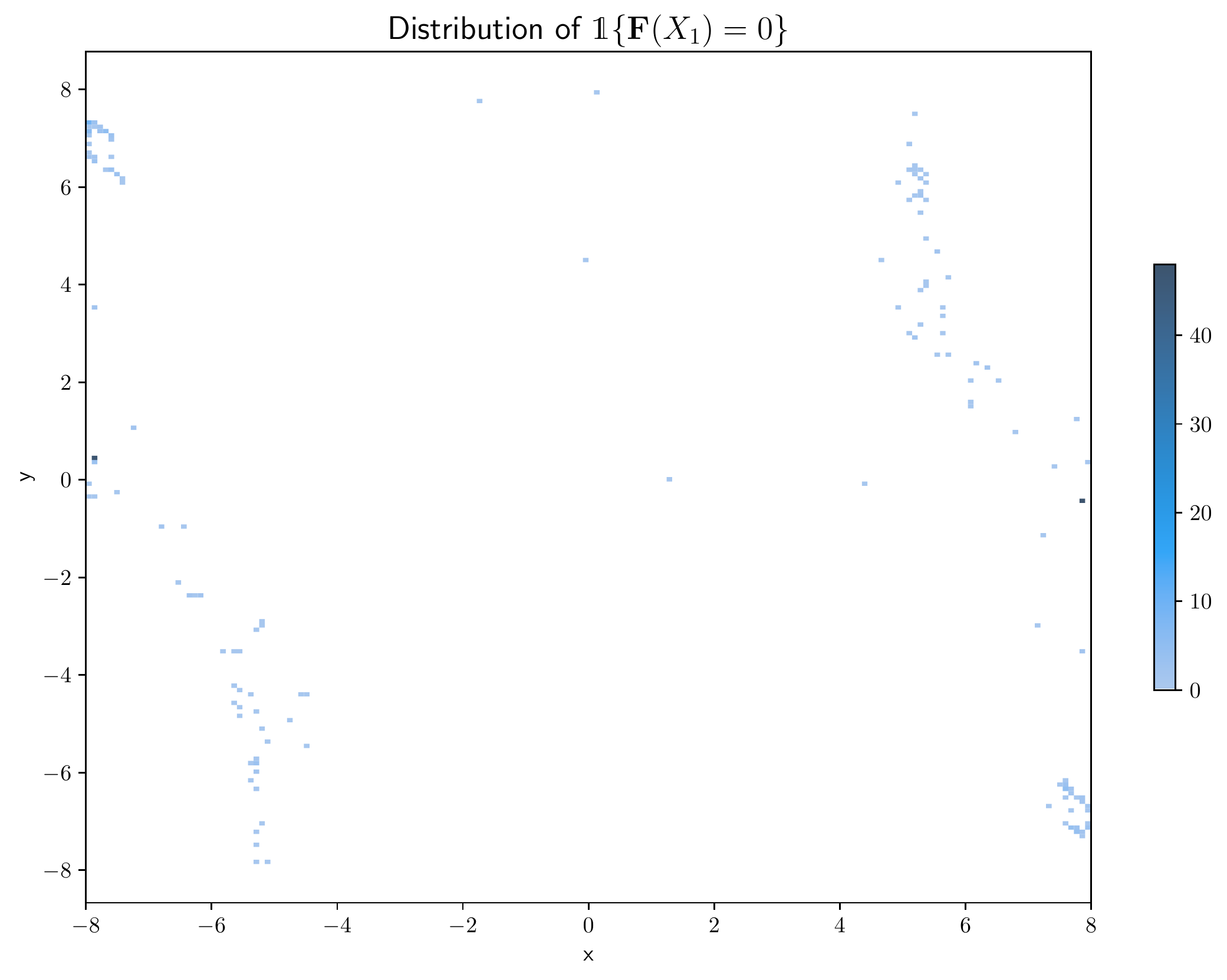}
\caption{A histogram based on $10^9$ independent samples of the uniform distribution in $\X$ in the Van der Pol example. Out of these $10^9$ samples $305$ samples were in $\{\F = 0\}$ and the above histogram is based on these $305$ samples. The intensity of the color indicates the number of samples found in a given square.}
\end{figure}

\begin{figure}
\label{metropolisVanDerPolDistFig}
\centering
\includegraphics[scale=0.6]{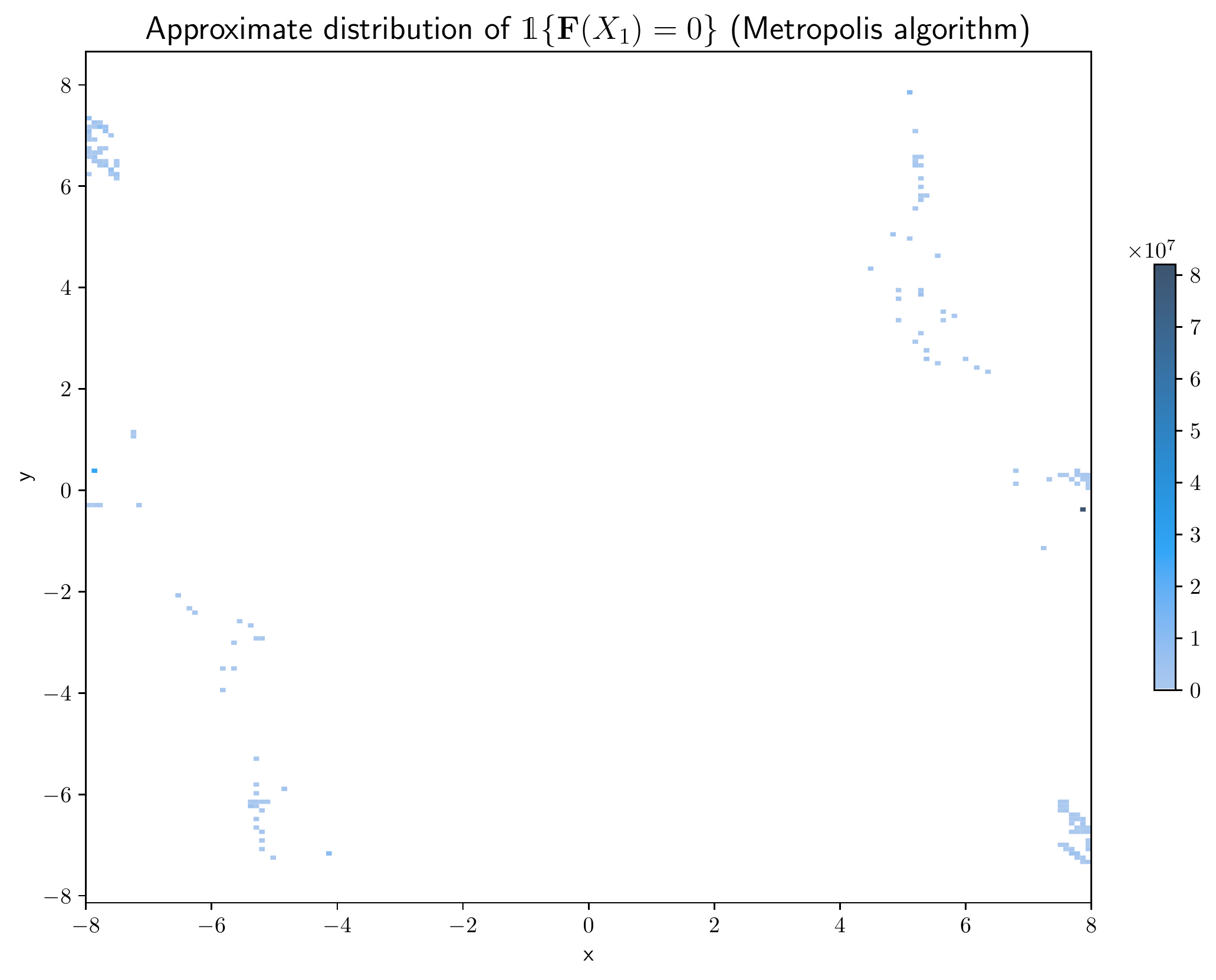}
\caption{A histogram based on $3.1\cdot 10^8$ samples obtained from the Metropolis algorithm as the approximate distribution of $\1 \{ \F(X_1)=0 \}$ in the Van der Pol example. Note the $\times 10^7$ on the scale to the right. Each sample of the Metropolis random walk is run $10^7$ steps and the 31 initial conditions were obtained by independently sampling the uniform distribution on $\X$. The remarkable resemblance between this and Figure \ref{vanDerPolDistFig} indicates the Metropolis algorithm as converged.}
\end{figure}

\subsubsection{Latency budget model results}
Given $x\in \X$ we use the strategy of giving 
\[
(0,x^T,0,x^T, \ldots , 0,x^T)^T
\]
as initial guess to our solver. This renders our FORCESPRO MPC controller as a map $\X \ni x \mapsto c(x)$ and we set $\F(x)=1$ if and only if our FORCESPRO solver terminates with exitflag $1$ and $\F(x)=0$ otherwise. We will assume $N=2$ and $Z_1$ will be the crude Monte Carlo estimator while $Z_2$ will be computed using Algorithm \ref{ModifiedMetropolisAlgorithm} with 
\begin{equation}
\label{van_der_pol_perturbations}
r_p = (0.15, 0.15) \quad \text{and} \quad r_{RWM} = (1.0, 1.0).
\end{equation}

We compute our statistical estimates using confidence level 
\begin{equation}
\label{van_der_pol_alpha}
\alpha := 1-10^{-6}.
\end{equation}
The simulation for $Z_1$ is based on $10^8$ independent samples from $P$ and yielded the following $\alpha$-confidence interval for $\P(A_1)$:
\[
(2.21\cdot 10^{-8}, 5.58\cdot 10^{-7}).
\]
For the estimate for $\P(A_2 \ |\ A_1)$ we have 31 initial conditions and obtain 
\[
z_2 = 5.74 \cdot 10^{-5}, \quad v_2 = 2.65 \cdot 10^{-9},
\]
which (using \ref{student_t_confidence_interval}) yields the following $\alpha$-confidence interval for $\P(A_2 \ |\ A_1)$:
\[
(8.21 \cdot 10^{-7}, 1.14\cdot 10^{-4})
\]
Adding up the results, our analysis provides the $\alpha$-upper bound 
\[
1.14 \cdot 10^{-4} \cdot 5.58\cdot 10^{-7}=6.36 \cdot 10^{-11}
\]
for the probability $\P(A_1\cap A_2)$ of the failure event (\ref{failure_event}). 

\subsubsection{Concurrent design results}
We think of our FORCESPRO interior point solver as a map
\[
\X \times \X \ni (x,y) \mapsto c(x,y),
\]
where $c(x,y)$ denoted the solution found by our FORCESPRO solver using $x$ as the state of our system and initial guess $(0,y^T,0,y^T, \ldots , 0,y^T)^T$. This allows us to build a control algorithm by running $c$ on three threads as depicted in Figure \ref{parallel_schematic}, using the strategy (b) on page \pageref{parallel_schematic}. We will use confidence level $\alpha=1-10^{-6}$, the crude Monte Carlo estimator (\ref{z0_estimator}) for $Z_1$ and set the parameters
\begin{equation}
\label{van_der_pol_perturbations}
r_p = (0.3, 0.3) \quad \text{and} \quad r_{RWM} = (1.0, 1.0).
\end{equation}
in the implementation of Algorithm \ref{ModifiedMetropolisAlgorithm}. Estimation of $\P ( A_1 )$ is identical to the estimation above, so the $\alpha$-confidence interval we achieve is given by
\[
(2.21\cdot 10^{-8}, 5.58\cdot 10^{-7}).
\]
For the estimation of $\P(A_2\ |\ A_1)$ we start with $31$ initial conditions and obtain
\begin{align*}
z_2 = 2.98\cdot 10^{-5}  \quad \text{and} \quad v_2 = 5.18 \cdot 10^{-10}.
\end{align*}
Using (\ref{student_t_confidence_interval}) this yields the following $\alpha$-confidence interval for $\P(A_2\ | \ A_1)$:
\[
(4.80 \cdot 10^{-6}, 5.48 \cdot 10^{-5})
\]
For the estimation of $P(A_3 | \cap_{j=1}^2 A_j)$ we start with $924$ initial conditions and obtain 
\begin{align*}
z_3 = 5.67 \cdot 10^{-5}  \quad \text{and} \quad v_3 = 5.97 \cdot 10^{-9},
\end{align*}
yielding the $\alpha$-confidence interval
\[
(4.42 \cdot 10^{-5}, 6.92 \cdot 10^{-5})
\]
As a consequence we get the following statistical upper bound for the probability of the failure event:
\[
\P(A_1 \cap A_2 \cap A_3) \leq 5.58 \cdot 10^{-7} \cdot 5.48 \cdot 10^{-5} \cdot 6.92 \cdot 10^{-5} \leq 2.12^{-15}
\]

\subsection{A quadrotor example}
\label{quadcopterExample}
Consider now the control problem of stabilizing a quadrotor at its hover state. Again we use FORCESPRO to design our real-time controller. The original design of how to implement this using FORCESPRO is published in \cite{Domahidi17}. The original model for the dynamics is discussed in more details in \cite{Quirynen13} and \cite{Hoffman07}. The statespace in our model consists of $p\in \R^3$ and $v\in \R^3$, denoting the position and velocity respectively, $q\in \S^3$ and $\Omega \in \R^3$ denoting the orientation expressed in quaternion representation and its angular momentum respectively, as well as our control input consisting of the angular acceleration of the propellers, denoted by $\omega_r \in \R^3$.\footnote{We denote by $\S^n \subset \R^{n+1}$ the unit $n$-dimensional sphere inside $\R^{n+1}$.} The differential equation governing the dynamics is given by 
\begin{align}
    \label{quadcopter_ode}
    \dot{p} &= v \nonumber \\
    \dot{q} &= \frac{1}{2}E^T \Omega \nonumber \\
    \dot{v} &= \frac{1}{m}RF - g\mathbf{1}_z \\
    \dot{\Omega} &= J^{-1}( T + \Omega \times J \Omega ) \nonumber \\
    \dot{\omega} &= \omega_r, \nonumber
\end{align}
where 
\begin{align*}
    F &= \sum_{k=1}^4 \frac{1}{2}\rho A C_l \omega_k^2 \mathbf{1}_z \\
    T &= \begin{pmatrix} T_1 & T_2 & T_3 \end{pmatrix}^T,
\end{align*}
with
\begin{align*}
    T_1 &= \frac{AC_lL \rho (\omega_2^2 - \omega_4^2)}{2} \\
    T_2 &= \frac{AC_lL \rho (\omega_1^2 - \omega_3^2)}{2} \\
    T_3 &= \frac{AC_lL \rho (\omega_1^2 -\omega_2^2 + \omega_3^2 - \omega_4^2)}{2}.
\end{align*}

\begin{figure}[H]
\begin{center}
    \begin{tabular}{|c|c|l|}
    \hline
    Parameter & Value & Description \\
    \hline
    $\rho$ & 1.23 kg/m$^3$ & Air density\\
    $A$  & 0.1m$^2$ & Propeller area \\
    $C_l$ & 0.25 & Lift coefficient \\
    $C_d$ & $0.75$ & Drag coefficient \\
    $m$ & $10$kg & Quadrotor mass \\
    $g$ & $9.81$m/s$^2$ & Gravitational acceleration \\
    $J_1$ & $0.25$kg$\cdot$m/s$^2$ & First link length \\
    $J_2$ & $0.25$kg$\cdot$m/s$^2$ & Second link length \\
    $J_3$ & $1.0$kg$\cdot$m/s$^2$ & Third link length \\
    \hline
    \end{tabular}
\end{center}
\caption{Parameter values for quadrotor model}
\end{figure}

The objective function we use for our MPC controller is given by 
\begin{align*}
&(u - u_{ref})^T \cdot \diag(0.1) \cdot (u - u_{ref}) \\
+&p^T \cdot \diag(1.0) \cdot p + v^T \cdot \diag(0.1) \cdot v \\
+&(q - q_{ref})^T \cdot \diag(100.0) \cdot (q - q_{ref}) \\
+ &\Omega^T \cdot \diag(0.1) \cdot \Omega
\end{align*}
with
\begin{align*}
    u_{ref} &= 40\cdot \mathbf{1}_4 \\
    q_{ref} &= \begin{pmatrix}  1 \\ 0 \\ 0 \\ 0\end{pmatrix}
\end{align*}

As in the toy example above the FORCESPRO solver generated for this control problem takes 2 inputs: The initial condition for the differential equation (i.e. the current state of our dynamical system) as well as an initial guess for the optimization algorithm.

\begin{rem}
    % FORCESPRO team's linux Jenkins machine.
    The simulations in this example were performed on a desktop PC with a 12 (i7-8700M) core Intel CPU, 3.20 GHz, 12-13MB cache, running 64-bit Ubuntu 18.04.5 LTS.
\end{rem}

\subsubsection{The simulation model}
\label{quadcopter_simulation_model}
Writing $(p,v,q,\Omega)$ for the state of our system I will use the following statespace
\begin{equation}
    \label{quadcopter_statespace}
    \X = [-50, 50]^3 \times [-50, 50]^3 \times \S^3 \times [-5, 5]^3,
\end{equation}
meaning this case falls under \ref{assumption1}. The way we sample from the uniform distribution on $\S^3$ is by using acceptance-rejection sampling: We sample uniformly in $[-1,1]^4$ and accept only vectors $\bar{q}$ meeting the condition
\[
    10^{-6} < |\bar{q}| < 1,
\]
for which we then set $q = \frac{\bar{q}}{|\bar{q}|}$. 
For the RWM and the statistical models we used the following quantities
\begin{align*}
    r_p &= \begin{pmatrix} 0.1 & 0.1 & 0.1 & 0.1 \end{pmatrix} \\
    r_{RWM} &= \begin{pmatrix} 7.0 & 7.0 & 0.5 & 1.0 \end{pmatrix},
\end{align*}
where on the $\S^3$-component we simply perturb $q$ with a vector uniformly distributed in $[-0.1, 0.1]^4$ (respectively $[-0.5, 0.5]^4$) and then normalize.

\subsubsection{Convergence diagnostics}
\label{quadrotor_converence_diagnostics}
As a means of analysing the extend to which the underlying RWM Markov chain has converged we proceed with the strategy outlined in section \ref{conv_diag_sec}. For simplicity we use five pilot functions $p_l:\X \to \R$ which are linear combinations of trigonometric $\cos/\sin$ applied to each component of $x\in \X$. The coefficients in the linear combinations are chosen uniformly at random in the interval $(-10, 10)$. 

By running crude Monte Carlo simulation with a total of $1.3\cdot 10^9$ samples uniformly distributed in $\X$ I recorded a true sample $x$ of $1437$ independent realizations from $U|_{ \{ \F = 0 \} }$. This yields the following true/honest means:
\begin{align*}
    \overline{p}_1(x) &= -5.0705, \quad \overline{p}_2(x) = 10.6891, \quad \overline{p}_3(x) = -7.7518, \\ \overline{p}_4(x) &= -4.2142, \quad \overline{p}_5(x) = 13.1102
\end{align*}
Running an initial crude Monte Carlo simulation with $3\cdot 10^8$ samples yielded $323$ true initial conditions for the batched version of the RWM. Running each trajectory $10^5$ (yielding a total of $323\cdot 10^5$ samples) resulted in the following pilot means:
\begin{align*}
    \overline{p}_1(y) &= -5.1373, \quad \overline{p}_2(y) = 10.8220, \quad \overline{p}_3(y) = -7.7082, \\
    \overline{p}_4(y) &= -3.9662, \quad \overline{p}_5(y) = 13.6644
\end{align*}
While it is of course always preferable to use more samples, this empirical verification of converges of the batched RWM is good enough that we use the same number of samples for our mean and variance estimates for our confidence interval for $\P(\F(X_1)=0 | \F(X_0)=0)$.

\subsubsection{Latency budget model results}
For the latency budget model we will think of our FORCESPRO solver as a map $\X \to \U$
by feeding it the input given by the initial state $x$ and initial guess $(30\cdot1_{\R^4}^T, x^T,\ldots , 30\cdot 1_{\R^4}^T, x^T)$ at the point $x\in \X$.
We will use confidence level $\alpha = 1-10^{-6}$ for all confidence intervals.
Concerning the estimate for $\P(A_1)$ we use $3\cdot 10^8$ independent uniform samples in $\X$ to compute the maximum likelihood estimates
\[
\E(Z_1) \approx 1.17\cdot 10^{-6}, \quad \V(Z_1) \approx 1.17\cdot 10^{-6}.
\]
Hence, the $\alpha$-confidence upper bound for $\P(A_1)$ provided by the CLT is given by 
\begin{equation}
\label{quadrotor_x1_confidence_bound}
1.48\cdot 10^{-6}.
\end{equation}
For the RWM we follow the sample sizes we outlined in section \ref{quadrotor_converence_diagnostics}. Hence, based on a crude Monte Carlo simulation using $3\cdot 10^8$ samples we found $351$ initial conditions for our RWM simulation. We ran each trajectory in the simulation for $10^{5}$ steps, yielding:
\[
z_2 \approx 5.31 \cdot 10^{-5}, \quad v_2 \approx = 5.15\cdot 10^{-6}.
\]
Applying the (\ref{student_t_confidence_interval}) the we obtain the following $\alpha$-confidence interval for $\P(A_2 \ |\ A_1)$:
\[
(2.74 \cdot 10^{-5}, 7.87 \cdot 10^{-5}).
\]
Hence, assuming $N=2$ we get the following $\alpha$-confidence upper bound for the probability of the failure event for our quadrotor example:
\[
1.48\cdot 10^{-6} \cdot 7.87 \cdot 10^{-5} = 1.16 \cdot 10^{-10}.
\]

\subsubsection{Concurrent design results}
We will think of our FORCESPRO solver as a map 
\[
\X \times \X \to \U
\]
by feeding to it the system state $x$ and initial guess $(30\cdot1_{\R^4}^T, y^T,\ldots , 30\cdot 1_{\R^4}^T, y^T)$ at the point $(x,y) \in \X \times \X$. This allows us to set up a system as in Figure \ref{parallel_schematic} by simply perturbing the second component (as explained in (b) on page \pageref{parallel_schematic}).

We will use the same confidence level $\alpha$ here as in the above section. The estimator for $\P( A_1 )$ is the same as above, so the statistical upper bound is given by (\ref{quadrotor_x1_confidence_bound}). For estimating $\P (A_2\ |\ A_1)$ we used the $323$ failure samples found in the estimation of $\P( A_1 )$ as initial conditions to run MCMC simulations of $10^5$ samples each to obtain
\[
z_2 = 1.09 \cdot 10^{-5}, \quad v_2 = 4.10 \cdot 10^{-10}
\]
which (using (\ref{student_t_confidence_interval})) yields the following $\alpha$-confidence interval for $\P (A_2\ |\ A_1)$:
\[
(5.28 \cdot 10^{-6}, 1.65 \cdot 10^{-5}).
\]
This computation produced $352$ failure samples which we used as initial conditions to run MCMC simulations of $10^5$ samples each to obtain 
\[
z_3 = 4.56 \cdot 10^{-5}, \quad v_3 = 2.59 \cdot 10^{-9}.
\]
Using (\ref{student_t_confidence_interval}) this yields the following $\alpha$-confidence interval for $\P(A_3 \ |\ A_1 \cap A_2)$:
\[
(3.21 \cdot 10^{-5}, 5.91 \cdot 10^{-5})
\]
Hence, the statistical upper bound we get for our failure event is 
\[
\P ( \cap_{k} A_k ) \leq 1.48\cdot 10^{-6} \cdot 1.65 \cdot 10^{-5} \cdot 5.91 \cdot 10^{-5} = 1.44^{-15}.
\]

\section{Conclusion}
We have presented a framework for analysing the reliability of real-time control systems. The central idea is a decomposition technique allowing one to apply the \emph{subset simulation} technique to estimate the failure probability. The decomposition exploits the structure of two statistical models: One model for simulating the interaction between the controller and the system it controls and one model for designing a composite control system. This technique renders extremely small failure probabilities computationally feasible and system reliability can be verified. Very small failure probabilities were verified for NMPC control algorithms in challenging examples.

\hspace{0.25cm}
\subsubsection*{Acknowledgement}
A big thanks goes to Joachim Ferreau for a lot of help and valuable discussions concerning this topic. A special thank goes to Roberto Barbieri who has consistently provided good questions which helped guide this research. In particular his explanation of how cyclic redundancy checks are used to guarantee small probabilities of corruption non-detection in ethernet communication was crucial in developing the ideas presented here. Thanks go to the FORCESPRO team at Embotech AG for creating a good working atmosphere where ideas can be discussed and are allowed to grow. Such an environment is crucial for innovation to have a chance to happen. Last but certainly not least we very much appreciate the help provided by Carlo Alberto Pasccuci and Aldo Zgraggen in preparing a talk based on this paper for a ESA/CNES/DLR/NASA V\&V for G\&C workshop.

\appendix
\section{A glimpse of Monte Carlo}
\label{MCappendix}
We hope that the techniques presented in this paper are of interest to people in the control community, who might not be well-versed in Monte Carlo simulation theory. For this reason we here collect a few pointers to the literature which should help guide the rigor-minded reader towards more complete presentations of the basics. Explaining how these results come about or even giving any real insight is certainly outside the scope of this paper. 

The Monte Carlo simulation technique was conceived of\footnote{At least in its modern formulation.} under the Manhatten project in order to compute integrals related to nuclear devices. Later it found great use in nuclear technology and has only caught on to become a central computational tool across many scientific disciplines later, perhaps due to the often extremely demanding computational requirements \cite[Introduction]{Zio14}. The fundamental idea behind Monte Carlo simulation is to use the Central Limit Theorem (CLT) and the computational powers of modern computers to yield statistical information about the world we live in. A special case of the CLT is the following

\begin{thm}[Central Limit Theorem]
\label{clt_theorem}
Let $(Z_k)_{k=1}^{\infty}$ be a sequence of independent, identically distributed Bernoulli random variables with $p:=\P(Z_1=1)\in (0,1)$ and $\P(Z_1=0)=1-p$. Then the means $\overline{Z}_N:= \frac{1}{N}\sum_{k=1}^NZ_k$ satisfy
\[
\sqrt{N}( \overline{Z}_N - p ) \stackrel{\mathcal{D}}{\to} \mathcal{N}(0,\sigma^2),
\]
where $\mathcal{N}(0,\sigma^2)$ denotes the a normal distribution with mean $0$ and variance $\sigma^2=p(1-p)$.
\end{thm}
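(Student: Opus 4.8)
The plan is to prove this special case of the central limit theorem via characteristic functions, following the classical Fourier-analytic route. First I would center the summands: set $Y_k := Z_k - p$, so that the $Y_k$ are i.i.d.\ with $\E(Y_1) = 0$ and $\E(Y_1^2) = \V(Z_1) = \sigma^2$, and rewrite the quantity of interest as
\[
W_N := \sqrt{N}(\overline{Z}_N - p) = \frac{1}{\sqrt{N}}\sum_{k=1}^N Y_k.
\]
The goal then becomes showing that the law of $W_N$ converges to $\mathcal{N}(0,\sigma^2)$, and the strategy is to establish pointwise convergence of characteristic functions and then appeal to L\'evy's continuity theorem.

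The main computation is to analyse $\phi_{W_N}(t) := \E(e^{itW_N})$. By independence of the $Y_k$ this factorizes as $\phi_{W_N}(t) = \left[\phi_Y(t/\sqrt{N})\right]^N$, where $\phi_Y$ is the common characteristic function of the $Y_k$. A second-order Taylor expansion of $\phi_Y$ at the origin, justified by the existence of the second moment, gives
\[
\phi_Y(s) = 1 - \tfrac{1}{2}\sigma^2 s^2 + o(s^2) \quad (s\to 0),
\]
using $\phi_Y(0)=1$, $\phi_Y'(0)=i\E(Y_1)=0$ and $\phi_Y''(0)=-\E(Y_1^2)=-\sigma^2$. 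Substituting $s = t/\sqrt{N}$ and raising to the $N$-th power, the standard limit $(1 + c_N/N)^N \to e^{c}$ whenever $c_N\to c$ then yields
\[
\phi_{W_N}(t) \longrightarrow e^{-\sigma^2 t^2/2} \quad \text{for every } t\in\R.
\]

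To finish, I would recognize $e^{-\sigma^2 t^2/2}$ as the characteristic function of $\mathcal{N}(0,\sigma^2)$; since this limit is continuous at $t=0$, L\'evy's continuity theorem promotes the pointwise convergence of characteristic functions to convergence in distribution, which is exactly the claim. The step I expect to require the most care is controlling the error term in the Taylor expansion precisely enough to pass to the limit inside the $N$-th power --- namely, showing that the $o(s^2)$ remainder, evaluated at $s=t/\sqrt{N}$ and multiplied by $N$, vanishes; this is where finiteness of the second moment is genuinely used. Alternatively, because $\sum_{k=1}^N Z_k$ is exactly $\mathrm{Binomial}(N,p)$ distributed, one could bypass characteristic functions entirely and give a direct de Moivre--Laplace argument, estimating the binomial point masses via Stirling's formula and matching them to the Gaussian density; this is more elementary but computationally heavier.
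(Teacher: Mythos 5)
Your argument is correct, but note that the paper itself offers no proof of this statement: Theorem \ref{clt_theorem} appears in the appendix purely as a quoted classical result, with the reader pointed to \cite{AsmussenGlynn07} and \cite{Owen13}. Your characteristic-function route (centering, factorization by independence, second-order Taylor expansion of $\phi_Y$ at the origin, the limit $(1+c_N/N)^N\to e^c$, and L\'evy's continuity theorem) is the standard textbook proof and goes through without difficulty. Two small points: first, since $Z_1$ is Bernoulli the summands are bounded, so the $o(s^2)$ remainder you flag as the delicate step is actually harmless here --- all moments exist, and one can even write the third-order remainder explicitly with a bound $|\E(Y_1^3)||s|^3/6$, making the passage to the limit inside the $N$-th power elementary; second, the limit $(1+c_N/N)^N\to e^c$ is being applied to complex $c_N$, which is still valid but is worth stating as such. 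Your observation that the de Moivre--Laplace theorem gives a fully elementary alternative (since $N\overline{Z}_N$ is exactly $\mathrm{Binomial}(N,p)$) matches a remark the paper itself makes about using the binomial CDF for exact confidence intervals, so either route is consistent with the paper's framing.
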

Typically, one sets up an experiment such that the quantity of interest is the mean $p = \E(Z_1)$ of a random variable $Z_1$. A statistician will then want to give an answer for $p = \E(Z_1)$ in terms of confidence intervals. The convergence in the CLT\footnote{The technical term for $\stackrel{\mathcal{D}}{\to}$ is "convergence in distribution".} in particular means that an approximate $\alpha$-confidence interval for $\E(Z_1)$ can be obtained by computing the $\alpha$-confidence interval given by $\mathcal{N}(0,\sigma^2)$ around the estimator $\overline{Z}_N$. Such a confidence interval has the explicit expression
\[
\overline{z}_N \pm \frac{t_{1-\alpha/2}\sigma}{\sqrt{N}},
\]
where $\overline{z}_N$ is the realization of $\overline{Z}_N$ and $t_{1-\alpha/2}$ is the $1-\frac{\alpha}{2}$ quantile of $\mathcal{N}(0,1)$. Computing this quantile is easily done using modern statistical software packages. As noted above $\sigma =\sqrt{p(1-p)}\approx \sqrt{p}$ if $p$ is small, so it can be seen from the above expression that $N \approx \frac{1}{p}$ samples are required to get $\frac{t_{1-\alpha/2}\sigma}{\sqrt{N}} \approx p$.

Typically one uses the Maximum Likelihood Estimate 
\[
\hat{\sigma}^2 := \frac{1}{N-1} \sum_{k=1}^N (z_k - \overline{z}_N)^2
\]
estimate for $\sigma^2$. Welford's algorithm computes this quantity in a numerically stable way "on the fly" \cite[Section 2.3]{Owen13}. See \cite[Chapter III]{AsmussenGlynn07} for an in depth discussion of this approach to computing approximate confidence intervals. 
Note that in the special case of I.I.D. Bernoulli variables presented in the above version of the CLT the sum $N\overline{Z}_N$ follows a $B(N,p)$ (Binomial) distribution so the cumulative distribution function of $B(N,p)$ can be used to compute an exact confidence interval for $\E(Z_1)$, see \cite[Chapter 2, Section 4]{Owen13}. 
The reason for presenting the CLT is that it is much more general and flexible.

The main simulation technique used in this paper is not the basic CLT but instead a Markov Chain Monte Carlo (MCMC) simulation technique known as the Metropolis-Hastings algorithm \cite{Metropolis1953, AsmussenGlynn07}. Perhaps the best introduction to MCMC is \cite{MCMCIntrobrooksGeyer}, but see also \cite{AsmussenGlynn07}, \cite{AuWang2014} or the original \cite{Metropolis1953}. This algorithm can be used if one is interested in sampling according to a distribution $f \cdot P$ which is absolutely continuous with respect to a probability measure $P$ from which one can sample (e.g. $P$ is the uniform distribution $U$ on $\X$ in section \ref{model_section}). A crucial feature of the algorithm used above is that it in fact suffices to know an unnormalized version $f_u$ of $f$.\footnote{I.e. a function $f_u$ such that $f = \frac{f_u}{\int f_u dP}$.} Given such $f_u$ and an initial value $x_0$ the algorithm determines $x_{k+1}$ from $x_k$ by sampling $q \sim Q(x_k)$ according to a \emph{proposal distribution} $Q(x_k)$ and setting 
\[
x_{k+1} = 
\begin{cases}
q, & \text{with probability} \ p:=\min \left( 1, \frac{f_u(x_{k+1})Q_{x_{k+1}}( x_k)}{f_u(x_{k})Q_{x_k}(x_{k+1})} \right) \\
x_k, & \text{with probability} \ 1-p, 
\end{cases}
\]
where $Q_x$ is the the density of the measure $Q(x)$ with respect to $P$. Under very general conditions the sequence $(x_k)_k$ will converge to a realization of a Markov chain $(X_k)_{k=1}^{\infty}$ which has $f \cdot P$ as a stationary distribution (see \cite{MCMCIntrobrooksGeyer}). Note that one cannot expect the $x_k$s to be independent, but there is a Markov Chain version of the CLT \cite[Chapter 11, Section 12]{Owen13}, saying that (if $N$ is large) an approximate $\alpha$-confidence interval for an integral of the form $\int F d(f\cdot P)$ is given by
\[
    \overline{F}_N \pm \frac{t_{1-\alpha/2}\sigma}{\sqrt{N}}
\]
where $\overline{F}_N = \frac{1}{N}\sum_{k=1}^N F(x_k)$ and 
\[
\sigma^2 = \V(X_1) +2\sum_{k=1}^{\infty} \mathbb{Cov}(X_1, X_{1+k}).
\]
The approach we take to estimating this $\sigma$ in this paper is by using a batch approach \cite[Chapter IV, Section 5]{AsmussenGlynn07}.

%%%%%%%%%%%%%%%%%%%%%%%%%%%%%%%%%%%%%%%%%%%%%%%%%%%%%%%%%%%%%%%%%%%%%%%%%%%%%%%%%%%%%%%%%%%%%%%%
\bibliographystyle{plain}
\bibliography{BIB}
\end{document}